\providecommand{\U}[1]{\protect\rule{.1in}{.1in}}
\tikzset{>=Triangle}
\newtheorem{theorem}{Theorem}[section]
\newtheorem{proposition}[theorem]{Proposition}
\newtheorem{corollary}[theorem]{Corollary}
\newtheorem{definition}[theorem]{Definition}
\numberwithin{equation}{section}
\pgfplotsset{compat=1.17}
\begin{document}
\title[A Quest for Convergence: Exploring Series in Non-Linear Environments]{A Quest for Convergence: Exploring Series in Non-Linear Environments}
\author[Geivison Ribeiro]{Geivison Ribeiro}
\address{Departamento de Matem\'{a}tica \\
Universidade Federal da Para\'{\i}ba \\
58.051-900 - Jo\~{a}o Pessoa, Brazil.}
\email{geivison.ribeiro@academico.ufpb.br}
\thanks{G. Ribeiro is supported by Grant 2022/1962, Para\'{\i}ba State Research
Foundation (FAPESQ)}
\subjclass[2020]{15A03, 46B87, 46A16}
\keywords{Lineability, spaceability, convergence, series, basic sequences, topological
vector space, quotient space.}

\begin{abstract}
This note presents an extension of a result within the concept of
$[\mathcal{S}]$-lineability, originally developed in 2019 by L.
Bernal-Gonz\'{a}lez, J.A. Conejero, M. Murillo-Arcila, and J.B.
Seoane-Sep\'{u}lveda $\left(  \text{see \cite{S lineability}}\right)  $.
Additionally, we provide a characterization in terms of lineability in the
context of complements of unions of closed subspaces in $F$-spaces, and
finally, we present a negative result in both normed spaces and $p$-Banach
spaces. These findings contribute to the understanding of linearity in exotic
settings in topological vector spaces.

\end{abstract}
\maketitle

\section{Introduction and background}

In recent years, the exploration of linearity within unconventional
mathematical contexts has emerged as a notable trend. This pursuit has been
underscored by the introduction of terms like \textit{lineability} and
\textit{spaceability}, originally coined by V.I. Gurary, which gained
prominence following their appearance in the seminal paper by Aron, Gurariy,
and Seoane-Sep\'{u}lveda; we refer the reader to \cite{AGSS}, $\left(
\text{see also \cite{Seoane}}\right)  $. The inception of these concepts
sparked interest among many mathematicians, catalyzing an increase in research
efforts in this direction. According to these terminologies, a subset $A$ of a
vector space $X$ is said to be lineable if $A\cup\left\{  0\right\}  $
encompasses an infinite dimensional vector subspace. Furthermore, in the
context of $X$ being a topological vector space, $A$ is called spaceable if
$A\cup\left\{  0\right\}  $ contains a closed infinite dimensional vector
subspace. A detailed account of lineability can be found in \cite{Aron,
Bernal, Enflo, Pellegrino}.

In 2004, during an Analysis Seminar at Kent State University, Gurariy
introduced the following concept of $\left[  \mathcal{S}\right]  $-lineability
(see \cite{S lineability}): Given a Hausdorff topological vector space
$X\neq\left\{  0\right\}  $ and a vector subspace $\mathcal{S}$ of
$\mathbb{K}^{\mathbb{N}}$ $\left(  \text{where }\mathbb{K=R}\text{ or
}\mathbb{C}\right)  $, a subset $A$ in $X$ is defined as:

\begin{itemize}
\item $\left[  \left(  u_{n}\right)  _{n=1}^{\infty},\mathcal{S}\right]
$-lineable\textbf{ }in $X$ if, for each sequence $\left(  c_{n}\right)
_{n=1}^{\infty}\in\mathcal{S}$, the series $\sum_{n=1}^{\infty}c_{n}u_{n}$
converges in $X$ to a vector of $A\cup\left\{  0\right\}  $.

\item $\left[  \mathcal{S}\right]  $-lineable in $X$ if it is $\left[  \left(
u_{n}\right)  _{n=1}^{\infty},\mathcal{S}\right]  $-lineable for some sequence
$\left(  u_{n}\right)  _{n=1}^{\infty}$ of linearly independent elements in
$X$.
\end{itemize}

As far as we know, the notion of $\left[  \mathcal{S}\right]  $-lineability
has been developed only in \cite{S lineability}.

To avoid trivial or undesirable situations, we shall assume throughout this
paper that a vector space $X$ never collapses to $\left\{  0\right\}  $.
Throughout this note, we will use the term subspace instead of vector subspace.

The symbol $\operatorname*{card}\left(  M\right)  $ will denote the
cardinality of the set $M$, and in particular, we will denote $\aleph_{0}$ as
the cardinality of $\mathbb{N}$.

This note is organized as follows. In Section \ref{section 2}, we consider
that $X$ is an $F$-space and we establish that a set of the form
$X\setminus\bigcup_{i\in I}Y_{i}$ is lineable if and only if it is $\left[
\mathcal{S}\right]  $-lineable for every nontrivial subspace $\mathcal{S}$ of
$\ell_{\infty}$. Recall that an $F$-space is a complete metrizable topological
vector space, and that every $F$-space $X$ supports an $F$-norm $\left\Vert
\cdot\right\Vert _{X}$. Recall also that an $F$-norm on a vector space $X$ is
a function $\left\Vert \cdot\right\Vert \colon X\longrightarrow\lbrack
0,\infty)$ satisfying, for all $x,y\in X$ and $\lambda\in\mathbb{K}$ the
following properties:

\begin{enumerate}
\item $\left\Vert x\right\Vert _{X}=0$ if and only if $x=0$.

\item $\left\Vert \lambda x\right\Vert _{X}\leq\left\Vert x\right\Vert _{X}$
if $\left\vert \lambda\right\vert \leq1$.

\item $\left\Vert \lambda x\right\Vert _{X}\rightarrow0$ if $\left\vert
\lambda\right\vert \rightarrow0$.

\item $\left\Vert x+y\right\Vert _{X}\leq\left\Vert x\right\Vert
_{X}+\left\Vert y\right\Vert _{X}$.
\end{enumerate}

Moving to Section \ref{section 3}, we extend a result in the realm of $\left[
\mathcal{S}\right]  $-lineability, originally attributed to L.
Bernal-Gonz\'{a}lez, J.A. Conejero, M. Murillo-Arcila, and J.B.
Seoane-Sep\'{u}lveda. Lastly, in Section \ref{section 4}, we present negative results.

\section{$\left[  \mathcal{S}\right]  $-lineability in the context of
complements of unions\label{section 2}}

In this section, we aim to characterize the family of complements of unions of
closed subspaces through the notion of $\left[  \mathcal{S}\right]
$-lineability. To this end, let us begin with the following notion as appeared
in \cite{Drewnowski}.

\begin{definition}
We say that a sequence $\left(  u_{n}\right)  _{n=1}^{\infty}$ of elements of
a topological vector space $X$ is topologically linearly independent, if for
each sequence $\left(  c_{n}\right)  _{n=1}^{\infty}\in\mathbb{K}^{\mathbb{N}%
}$ with $\sum_{n=1}^{\infty}c_{n}u_{n}=0$, we have $\left(  c_{n}\right)
_{n=1}^{\infty}=0$.
\end{definition}

Based on this definition, if $\mathcal{S}\neq\left\{  0\right\}  $ is a
subspace of $\mathbb{K}^{\mathbb{N}}$ then we will say that a sequence
$\left(  u_{n}\right)  _{n=1}^{\infty}$ of elements of a topological vector
space $X$ is $\mathcal{S}$-topologically linearly independent in\textbf{ }$X$
(or $\mathcal{S}$-independent) if for each sequence $\left(  c_{n}\right)
_{n=1}^{\infty}\in\mathcal{S}$ with $\sum_{n=1}^{\infty}c_{n}u_{n}=0$, we have
$\left(  c_{n}\right)  _{n=1}^{\infty}=0$.

Within this perspective, still in \cite{Drewnowski}, Drewnowski et al.
demonstrated the following result, which will be a crucial ingredient for the
proof of the theorem \ref{lineab x S_lineab}. It is worth mentioning that this
result establishes a connection between linear $\ell_{\infty}$-independence
and linear independence.

\begin{proposition}
\label{Proposition sequence l-independent} Assume that $\left(  x_{n}\right)
_{n=1}^{\infty}$ is a linearly independent sequence in a Hausdorff topological
vector space $X$. Then there is $\lambda_{n}>0$ such that $\left(  \lambda
_{n}x_{n}\right)  _{n=1}^{\infty}$ is $\ell_{\infty}$-independent.
\end{proposition}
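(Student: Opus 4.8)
The plan is to construct the scaling factors $\lambda_n > 0$ so that the tail contributions of any $\ell_\infty$ sequence become negligible, forcing the $\ell_\infty$-independence from the ordinary linear independence already assumed. The key mechanism is to make the series $\sum_n \lambda_n x_n$ converge absolutely in the $F$-norm sense, so that rearrangement and truncation arguments are available.

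First I would fix an $F$-norm $\|\cdot\|_X$ compatible with the topology of $X$ (since $X$ is Hausdorff, such a translation-invariant metric exists, and although the proposition does not assume completeness, only the $F$-norm structure is needed here). Then, using property (3) of the $F$-norm, for each $n$ I would choose $\lambda_n > 0$ small enough that $\|\lambda_n x_n\|_X \leq 2^{-n}$. This guarantees that for every bounded scalar sequence $(c_n) \in \ell_\infty$ with $\|(c_n)\|_\infty \leq M$, we have (for $|c_n / M| \leq 1$, invoking property (2)) the estimate $\|c_n \lambda_n x_n\|_X = \|(c_n/M)(M\lambda_n x_n)\|_X \leq \|M \lambda_n x_n\|_X$; combined with the subadditivity in (4), the partial sums of $\sum_n c_n \lambda_n x_n$ form a Cauchy sequence whose tails are controlled uniformly. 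The point is that convergence of $\sum_n c_n \lambda_n x_n$ is then automatic (modulo completeness, which I would either assume tacitly or handle by noting that the statement only asserts independence, i.e., a statement about the zero vector, not about convergence of all series).

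Next I would establish the independence itself. Suppose $(c_n) \in \ell_\infty$ satisfies $\sum_{n=1}^\infty c_n (\lambda_n x_n) = 0$ in $X$. The goal is to deduce $c_n = 0$ for all $n$. Here I would argue by contradiction or by a projection-style argument: fix an index $k$ and isolate the $k$-th term. Writing $c_k \lambda_k x_k = -\sum_{n \neq k} c_n \lambda_n x_n$, I would like to conclude that $c_k = 0$ because $x_k$ cannot be written as a convergent combination of the other $x_n$'s. The honest obstacle is that ordinary linear independence only rules out \emph{finite} linear combinations, whereas here we face an infinite series, so the naive appeal to linear independence is insufficient.

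The hard part, therefore, is converting linear independence (a finitary condition) into the infinitary $\ell_\infty$-independence. I expect the resolution requires refining the choice of $\lambda_n$ so that each $x_k$ is \emph{topologically} separated from the closed span of the remaining tail $\{\lambda_n x_n : n > N\}$; concretely, one chooses $\lambda_n$ recursively so that at each stage the new term is small relative to the gap between $\mathrm{span}\{x_1,\dots,x_{n-1}\}$ and $x_n$ in the $F$-norm. This recursive ``fast-decay'' construction, exploiting the Hausdorff property to guarantee strictly positive separation distances at each finite stage, is the crux of the argument, and Drewnowski's original proof presumably formalizes exactly this inductive control; I would aim to reproduce that inductive scheme, verifying at the end that the accumulated smallness forces every coefficient to vanish.
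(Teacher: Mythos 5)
First, a point of comparison: the paper itself contains no proof of this proposition --- it is quoted from Drewnowski, Labuda and Lipecki \cite{Drewnowski} --- so your attempt has to be judged against the argument that is known to work, and it has two genuine gaps. The first is that your framework rests on a false claim: you assert that ``since $X$ is Hausdorff, such a translation-invariant metric exists.'' A Hausdorff topological vector space need not admit an $F$-norm; by the Birkhoff--Kakutani theorem, metrizability of a topological vector space is equivalent to having a countable base of neighborhoods of $0$, and Hausdorffness is far weaker (consider $\mathbb{R}^{I}$ with $I$ uncountable, or an infinite-dimensional Banach space in its weak topology). So your quantitative estimates $\left\Vert \lambda_{n}x_{n}\right\Vert _{X}\leq2^{-n}$ are unavailable in the stated generality: the proposition concerns arbitrary Hausdorff topological vector spaces, and a correct proof must work with neighborhoods of $0$ rather than with a norm.

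The second, more serious gap is that the heart of the proof is absent. You correctly identify the obstacle --- linear independence constrains only finite combinations, while $\ell_{\infty}$-independence is a statement about convergent infinite series --- but you then say only that you ``expect'' a recursive choice of $\lambda_{n}$ to resolve it and that Drewnowski's proof ``presumably formalizes exactly this inductive control.'' That is a declaration of intent, not an argument. Worse, the mechanism you sketch points the wrong way: making $\lambda_{n}x_{n}$ small ``relative to the gap'' between $x_{n}$ and $\operatorname*{span}\left\{  x_{1},\ldots,x_{n-1}\right\}  $ is not a coherent goal, because rescaling by a small $\lambda_{n}$ \emph{shrinks} the distance from $\lambda_{n}x_{n}$ to that span (the span contains $0$). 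The working argument is different: one builds a tower of closed balanced neighborhoods $V_{N}$ of $0$ with $V_{N+1}+V_{N+1}\subset V_{N}$, chosen so that every vector of $F_{N}\cap V_{N}$, where $F_{N}=\operatorname*{span}\left\{  x_{1},\ldots,x_{N}\right\}  $, has all coordinates of modulus at most $1/N$; this is possible because a finite-dimensional subspace of a Hausdorff topological vector space carries the Euclidean topology, so its coordinate functionals are continuous. Choosing $\lambda_{n}>0$ with $\lambda_{n}x_{n}\in V_{n}$ then places every tail $\sum_{n=N+1}^{M}c_{n}\lambda_{n}x_{n}$ with $\sup_{n}\left\vert c_{n}\right\vert \leq1$ inside $V_{N}$; if the full series converges to $0$, the partial sums therefore lie in $F_{N}\cap V_{N}$, giving $\left\vert c_{k}\right\vert \lambda_{k}\leq1/N$ for all $N\geq k$, hence $c_{k}=0$ for every $k$. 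Your write-up supplies neither this construction nor any substitute for it, and note that it nowhere needs an $F$-norm.
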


Em \cite{Kitson} , the authors made a notable contribution with the following result:

\begin{theorem}
\cite[Theorem 7.4.2]{Kitson} Let $Z_{r}$ $\left(  r\in\mathbb{N}\right)  $ be
Banach spaces and let $X$ be a Fr\'{e}chet space. Let $T_{r}\colon
Z_{r}\longrightarrow X$ be continuous linear mappings and
$Y=\operatorname*{span}\left(
{\textstyle\bigcup\nolimits_{r=1}^{\infty}}
T_{r}\left(  Z_{r}\right)  \right)  $. If $Y$ is not closed in $X$ then the
complement $X\smallsetminus Y$ is spaceable.
\end{theorem}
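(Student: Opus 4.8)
The plan is to follow the operator-range/Baire-category strategy and produce a basic sequence whose closed span avoids $Y$. First I would reduce to the dense case. Since $Y$ is not closed, $\overline{Y}$ is a closed subspace of $X$ with $\overline{Y}\neq Y$, and $\overline{Y}$ is itself a Fréchet space into which every $T_{r}$ maps. A closed infinite-dimensional subspace $M\subseteq(\overline{Y}\setminus Y)\cup\{0\}$ is closed in $\overline{Y}$, hence closed in $X$, and satisfies $M\subseteq(X\setminus Y)\cup\{0\}$; so it suffices to prove spaceability of $\overline{Y}\setminus Y$ inside $\overline{Y}$. After renaming $\overline{Y}$ as $X$, the subspace $Y$ is \emph{dense} and, because it was not closed, \emph{proper}: $Y\neq X$.

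Next I set $V_{n}=T_{1}(Z_{1})+\cdots+T_{n}(Z_{n})$. Each $V_{n}$ is the range of the continuous operator $R_{n}\colon Z_{1}\oplus\cdots\oplus Z_{n}\to X$ given by $R_{n}(z_{1},\dots,z_{n})=\sum_{j\le n}T_{j}z_{j}$ from a Banach space, so it is an \emph{operator range}; the $V_{n}$ increase and $Y=\bigcup_{n}V_{n}$. Since $X$ is Fréchet it is a Baire space, and a continuous linear image of a Banach space that is non-meager in a Fréchet space must, by the open mapping theorem, fill the whole space; as $V_{n}=X$ would give $Y=X$, each $V_{n}$ is meager and proper. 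I then equip each $V_{n}$ with its quotient (minimal) norm $\gamma_{n}$, so that $(V_{n},\gamma_{n})$ is Banach and the inclusion $(V_{n},\gamma_{n})\hookrightarrow X$ is continuous.

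The heart of the argument is to build a basic sequence $(x_{k})_{k\ge 1}$ in $X$ whose closed span $M=\overline{\operatorname{span}}\{x_{k}\}$ meets $Y$ only at $0$. Fixing an $F$-norm $\|\cdot\|$ for $X$, I would construct $(x_{k})$ inductively so that: (a) $\|x_{k}\|\le 2^{-k}$ and $(x_{k})$ satisfies the Mazur basic-sequence criterion relative to a biorthogonal system, so that $M$ is infinite-dimensional, closed, and the coefficient functionals $x\mapsto a_{k}$ are continuous on $M$; and (b) a scheduling condition ensuring that for every $m$ the vector $x_{k}$ is \emph{quantitatively far}, as measured through $\gamma_{m}$, from $V_{m}+\operatorname{span}\{x_{1},\dots,x_{k-1}\}$ for all large $k$. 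Choosing $x_{k}$ outside the meager proper set $V_{k}+\operatorname{span}\{x_{1},\dots,x_{k-1}\}$ is always possible, and smallness in $\|\cdot\|$ is compatible with largeness in the $\gamma_{m}$ norms precisely because each inclusion $(V_{m},\gamma_{m})\hookrightarrow X$ is continuous. To verify escape, suppose $0\neq v=\sum_{k}a_{k}x_{k}\in M$ lies in some $V_{m}$; let $k_{0}$ be least with $a_{k_{0}}\neq 0$, which by (b) I may take $\ge m$. Writing $a_{k_{0}}x_{k_{0}}=v-\sum_{k>k_{0}}a_{k}x_{k}$ and using the continuity of the coefficient functionals together with the continuous inclusion $(V_{m},\gamma_{m})\hookrightarrow X$, the tail is negligible in the $\gamma_{m}$-metric and one is forced to place $x_{k_{0}}$ in $V_{m}+\operatorname{span}\{x_{1},\dots,x_{k_{0}-1}\}$ up to arbitrarily small $\gamma_{m}$-error, contradicting (b). Hence every $a_{k}=0$, so $M\cap Y=\{0\}$ and $M\subseteq(X\setminus Y)\cup\{0\}$.

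The main obstacle is the simultaneous quantitative control in step (b): I must diagonalize so that a single sequence $(x_{k})$ escapes \emph{all} the operator ranges $V_{m}$ at once while remaining a genuine basic sequence with summable $F$-norms. Balancing the basis constant against the (generally incompatible) operator-range norms $\gamma_{m}$, and converting the qualitative fact that $x_{k_{0}}$ is far from $V_{m}$ into a genuine contradiction with $v\in V_{m}$ via a gliding-hump estimate on the tail, is the delicate part. It is worth noting that the hypothesis that $Y$ is \emph{not closed}—equivalently, after the reduction, that $Y$ is a proper dense union of meager operator ranges—is exactly what makes the escape vectors in (b) available; if $Y$ were closed the reduction would yield $Y=X$ and there would be nothing to prove.
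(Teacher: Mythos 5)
First, a point of order: the paper does not prove this statement at all --- it is quoted verbatim from Kitson and Timoney (cited as Theorem 7.4.2 in \cite{Kitson}) and used as a black box, so your attempt can only be compared with the known proof from that reference. Your overall framework does match the architecture of that proof: the reduction to the case where $Y$ is dense and proper (by passing to $\overline{Y}$, which is again Fr\'echet and receives every $T_r$), writing $Y$ as the increasing union of operator ranges $V_n=T_1(Z_1)+\cdots+T_n(Z_n)$, the dichotomy (via the open mapping theorem and Baire category) that each $V_n$ is either all of $X$ or meager, and the Banach norms $\gamma_n$ with continuous inclusion $(V_n,\gamma_n)\hookrightarrow X$. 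All of that is correct and is essentially how the real proof begins.

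However, the heart of the argument --- your step (b) and its verification --- has genuine gaps, not merely unfinished ``delicate details''. First, the escape condition is mis-scheduled: choosing $x_k$ outside $V_k+\operatorname{span}\{x_1,\dots,x_{k-1}\}$ does not prevent an early vector from lying in a later range (nothing stops $x_1\in V_5$), so an element of $M\cap Y$ such as $v=x_1$ itself is never excluded; correspondingly, your assertion that the least index $k_0$ with $a_{k_0}\neq 0$ ``may be taken $\geq m$'' has no justification. The repair is to choose $x_k$ outside $Y+\operatorname{span}\{x_1,\dots,x_{k-1}\}$ (still meager, being a countable union of meager sets) and to impose the quantitative conditions diagonally for all $m\leq k$. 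Second, ``quantitatively far as measured through $\gamma_m$'' is undefined for points outside $V_m$, where $\gamma_m$ does not exist; the usable formulation is $X$-distance to the $X$-closures of the balls $\{w\in V_m:\gamma_m(w)\leq R\}$, which are nowhere dense (if such a closure had nonempty interior, the Schauder open-mapping argument would force $V_m=X$). Third, the gliding hump as you run it fails: in $a_{k_0}x_{k_0}=v-\sum_{k>k_0}a_kx_k$ the tail involves the \emph{later} vectors $x_k$, $k>k_0$, lies in $M$ rather than in $V_m$, and therefore has no $\gamma_m$-size to be ``negligible''; in particular nothing forces $x_{k_0}$ into $V_m+\operatorname{span}\{x_1,\dots,x_{k_0-1}\}$ up to small $\gamma_m$-error. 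A correct absorption argument needs the continuity of the basis projections (or of biorthogonal functionals) on $M$ together with the closedness in $X$ of the scaled balls above, and this is precisely the technical content of the Kitson--Timoney construction that your sketch leaves unproved.
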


At this point, a characterization of spaceability for the complements of
non-enumerable unions of subspaces in the context of $F$-spaces in terms of
lineability has yet to be developed. It is not even known if these complements
are $[\mathcal{S}]$-lineable, and it is precisely for this purpose that we
present the following result:

\begin{theorem}
\label{lineab x S_lineab}Let $X\neq\left\{  0\right\}  $ be an $F$-space and
$\left\{  Y_{i}\right\}  _{i\in I}$ be a family of nontrivial closed subspaces
of $X$. The set $X\setminus\bigcup_{i\in I}Y_{i}$ is lineable if and only if
it is $\left[  \mathcal{S}\right]  $-lineable for every subspace
$\mathcal{S}\neq\left\{  0\right\}  $ of $\ell_{\infty}$.
\end{theorem}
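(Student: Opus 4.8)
The plan is to prove the two implications of Theorem \ref{lineab x S_lineab} separately, writing $A=X\setminus\bigcup_{i\in I}Y_i$ throughout. The direction ``$[\mathcal{S}]$-lineable $\Rightarrow$ lineable'' is the routine one: I would specialize to $\mathcal{S}=c_{00}$, the nontrivial subspace of finitely supported sequences of $\ell_{\infty}$. By hypothesis there is a linearly independent sequence $(u_n)$ with $\sum_{n}c_nu_n\in A\cup\{0\}$ for every $c\in c_{00}$, and since these series are finite sums this says exactly that $\operatorname{span}(u_n)\subseteq A\cup\{0\}$. As $(u_n)$ is linearly independent, this span is infinite dimensional, so $A$ is lineable.

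For the converse I would first observe that it suffices to produce \emph{one} linearly independent sequence $(u_n)$ with $\sum_{n}c_nu_n\in A\cup\{0\}$ for every $c\in\ell_{\infty}$: such a sequence simultaneously witnesses $[\mathcal{S}]$-lineability for every nontrivial $\mathcal{S}\subseteq\ell_{\infty}$. Since $A$ is lineable, fix an infinite dimensional subspace $W\subseteq A\cup\{0\}$, so that $W\cap Y_i=\{0\}$ for every $i$, choose a linearly independent sequence $(x_n)\subseteq W$, and set $u_n=\lambda_nx_n$ with $\lambda_n>0$ to be determined. Using the $F$-norm together with properties (2)--(4) and completeness, I would choose $\lambda_n$ so fast-decaying that $\|k\lambda_nx_n\|_X\le 2^{-n}$ for all $k\le n$; a standard estimate (splitting off the finitely many indices below $\lceil\|c\|_\infty\rceil$) then gives $\sum_{n}\|c_nu_n\|_X<\infty$ for every $c\in\ell_{\infty}$, so each series converges in $X$ to some $w=\Phi(c)$.

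It then remains to force $w\in A\cup\{0\}$, that is, $w\neq 0\Rightarrow w\notin Y_i$ for all $i$. The reformulation I would use goes through the quotient maps $q_i\colon X\to X/Y_i$: since $Y_i$ is closed, $X/Y_i$ is a Hausdorff $F$-space, and $w\in Y_i$ is equivalent to $q_i(w)=\sum_{n}c_n\lambda_nq_i(x_n)=0$. Because $W\cap Y_i=\{0\}$, the restriction $q_i|_W$ is injective, so $(q_i(x_n))_n$ is linearly independent in $X/Y_i$. Thus the goal becomes the statement that the single sequence $(\lambda_nq_i(x_n))_n$ is $\ell_{\infty}$-independent in $X/Y_i$ \emph{simultaneously for every} $i\in I$; granting this, $q_{i_0}(w)=0$ with $c\neq 0$ is impossible, whence $w\neq0\Rightarrow w\notin\bigcup_iY_i$ and $\Phi(\ell_{\infty}\setminus\{0\})\subseteq A$, which is what we want.

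The main obstacle is precisely this simultaneity over the (possibly uncountable) index set $I$: Proposition \ref{Proposition sequence l-independent} produces, in each $X/Y_i$ \emph{separately}, a suitable scaling, whereas we need one $(\lambda_n)$ serving all quotients at once. The tool I would exploit is the quotient $F$-norm inequality $\|q_i(v)\|_{X/Y_i}\le\|v\|_X$, valid for every $i$: it makes the tail estimates coming from the fast-decay choice of $\lambda_n$ hold \emph{uniformly} in all quotients, so that $\sum_{n>N}c_n\lambda_nq_i(x_n)\to 0$ with bounds independent of $i$. The coefficient-recovery step can then be carried out inside each fixed $X/Y_i$, using only the linear independence of $(q_i(x_n))_n$ and the fact that finite dimensional subspaces of a Hausdorff topological vector space are linearly homeomorphic to $\mathbb{K}^N$; this mirrors the mechanism behind Proposition \ref{Proposition sequence l-independent} but requires no uniform modulus across $i$. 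Verifying that the $X$-side decay genuinely transfers the full independence argument to every quotient---and not merely the convergence---is the delicate point I expect to demand the most care.
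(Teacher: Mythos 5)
Your easy implication (via $\mathcal{S}=c_{00}$) and your reduction of the converse to producing one sequence witnessing $[\ell_\infty]$-lineability are correct, and they coincide with the paper's own route; your fast-decay choice of $(\lambda_n)$ using property (3) of the $F$-norm is a perfectly good substitute for the paper's normalization $\sum_n\Vert x_n\Vert_X<\infty$ combined with the factors $\alpha_n=\lambda_n/(1+\lambda_n)$. The gap is in the step you yourself flag as delicate: the claim that, once the tails are uniformly small in the $X$-norm, coefficient recovery can be carried out inside each fixed quotient $X/Y_i$ using only the linear independence of $(q_i(x_n))_n$. That claim is false, and it cannot be repaired.

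Recovery in $X/Y_i$ needs \emph{lower} bounds, not upper bounds. Normalizing $\Vert c\Vert_\infty=1$ and picking $N$ with $|c_N|\ge\tfrac12$, your tail estimate gives $\Vert\sum_{n\le N}c_n\lambda_nq_i(x_n)\Vert_{X/Y_i}\le 2^{-N}$, and a contradiction requires that every block $\sum_{n\le N}a_n\lambda_nq_i(x_n)$ with $|a_n|\le1$ and $\max_{n\le N}|a_n|\ge\tfrac12$ have quotient norm \emph{exceeding} $2^{-N}$. That quotient norm is the distance in $X$ from a compact subset of $\operatorname{span}\{x_1,\dots,x_N\}\subseteq W$ to $Y_i$: it is positive for each fixed $i$, but nothing bounds it below uniformly in $i$; the inequality $\Vert q_i(v)\Vert_{X/Y_i}\le\Vert v\Vert_X$ points in the wrong direction, and this is exactly why Proposition \ref{Proposition sequence l-independent} must choose the scalings inductively against these moduli, one space at a time. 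Indeed no predetermined $(\lambda_n)$ can work: if $s:=\sum_n\lambda_nx_n$ is nonzero and lies outside $W$, then $Y:=\operatorname{span}\{s\}$ is a closed subspace with $Y\cap W=\{0\}$ in which $\sum_n\lambda_nq(x_n)=0$ with coefficients identically $1$. Worse, the forward implication of Theorem \ref{lineab x S_lineab} fails outright in this generality: take $X=\ell_2$ and let $\{Y_i\}_{i\in I}$ be the family of all lines $\operatorname{span}\{x\}$ with $x\in\ell_2\setminus c_{00}$, so that $X\setminus\bigcup_{i\in I}Y_i=c_{00}\setminus\{0\}$, which is lineable. It is not $[\ell_\infty]$-lineable: any witnessing sequence $(u_n)$ must lie in $c_{00}$ (test $c=e_k$); the summation operator $T\colon\ell_\infty\to\ell_2$, $Tc=\sum_nc_nu_n$, is then everywhere defined with closed graph (each coordinate functional $c\mapsto\sum_nc_nu_n(j)$ is bounded, since $\sum_n|u_n(j)|<\infty$ by testing sign sequences), hence continuous; and Baire's theorem applied to $\ell_\infty=\bigcup_NT^{-1}(\operatorname{span}\{e_1,\dots,e_N\})$ forces $T$ to have finite-dimensional range, contradicting the linear independence of $(Te_k)_k=(u_k)_k$. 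You should also know that the paper's own proof founders on precisely the point you isolated: it fixes $i\in I$, obtains $(\lambda_n)$ from Proposition \ref{Proposition sequence l-independent} for that single quotient, and then, at the contradiction step, invokes $\ell_\infty$-independence in $X/Y_j$ for whichever $j$ contains the limit $y$. So your diagnosis of the crux is sharper than the paper's treatment of it, but the proposed repair does not close the gap, and no choice of scalings can.
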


\begin{proof}
Assume that $X\setminus\bigcup_{i\in I}Y_{i}$ is lineable and consider the
quotient map $Q_{i}\colon X\longrightarrow X/Y_{i}$ $\left(  i\in I\right)  $.
Let $E$ be an infinite dimensional subspace of $X$ such that
\[
E\cap\left(  \bigcup\limits_{i\in I}Y_{i}\right)  =\left\{  0\right\}
\text{.}%
\]
Let $\left(  x_{n}\right)  _{n=1}^{\infty}$ be a sequence of elements linearly
independent in $E$ such that
\begin{equation}
\sum\limits_{n=1}^{\infty}\left\Vert x_{n}\right\Vert _{X}<\infty\text{.}
\label{serie da norma de xn convergindo}%
\end{equation}
Fix $i\in I$. Due to the fact that $X\setminus\bigcup_{i\in I}Y_{i}$ is
lineable, we can infer that the sequence $\left(  Q_{i}\left(  x_{n}\right)
\right)  _{n=1}^{\infty}$ is also linearly independent in $X/Y_{i}$. Indeed,
if for some $N\in\mathbb{N}$, we have%
\[
\sum\limits_{n=1}^{N}a_{n}Q_{i}\left(  x_{n}\right)  =0\text{,}%
\]
then%
\[
Q_{i}\left(  \sum\limits_{n=1}^{N}a_{n}x_{n}\right)  =0\text{.}%
\]
Hence%
\[
\sum\limits_{n=1}^{N}a_{n}x_{n}\in Y_{i}\cap E=\left\{  0\right\}  \text{.}%
\]
That is,%
\[
a_{n}=0\text{,}%
\]
which proves the desired linear independence in $X/Y_{i}$. Since $X/Y_{i}$ is
a Hausdorff topological vector space, we can invoke Proposition
\ref{Proposition sequence l-independent} to obtain a sequence of positive real
numbers $\left(  \lambda_{n}\right)  _{n=1}^{\infty}$ such that the sequence
$\left(  Q_{i}\left(  \lambda_{n}x_{n}\right)  \right)  _{n=1}^{\infty}$is
$\ell_{\infty}$-independent in $X/Y_{i}$. We also claim that the sequence
$\left(  \alpha_{n}\right)  _{n=1}^{\infty}$, where $\alpha_{n}:=\frac
{\lambda_{n}}{1+\lambda_{n}}$ is such that $\left(  Q_{i}\left(  \alpha
_{n}x_{n}\right)  \right)  _{n=1}^{\infty}$is $\ell_{\infty}$-independent in
$X/Y_{i}$. Indeed, let $\left(  t_{n}\right)  _{n=1}^{\infty}\in\ell_{\infty}$
be such that
\[
\sum\limits_{n=1}^{\infty}t_{n}Q_{i}\left(  \alpha_{n}x_{n}\right)  =0\text{.}%
\]
Hence,
\begin{equation}
\sum\limits_{n=1}^{\infty}\frac{t_{n}}{1+\lambda_{n}}Q\left(  \lambda_{n}%
x_{n}\right)  =0\text{.} \label{combinacao de tn sobre um mais lambda n}%
\end{equation}
Since
\[
\left(  \frac{t_{n}}{1+\lambda_{n}}\right)  _{n=1}^{\infty}\in\ell_{\infty
}\text{,}%
\]
and $\left(  Q_{i}\left(  \lambda_{n}x_{n}\right)  \right)  _{n=1}^{\infty}$is
$\ell_{\infty}$-independent in $X/Y_{i}$, we get%
\[
\frac{t_{n}}{1+\lambda_{n}}=0\text{ for each }n\in\mathbb{N}\text{,}%
\]
and this entails that
\[
t_{n}=0\text{ for each }n\in\mathbb{N}\text{.}%
\]
Therefore, $\left(  Q_{i}\left(  \alpha_{n}x_{n}\right)  \right)
_{n=1}^{\infty}$ is really $\ell_{\infty}$-independent in $X/Y_{i}$.
Furthermore, since\textbf{ }$\left\vert \alpha_{n}\right\vert \leq1$\textbf{
}for all\textbf{ }$n\in N$\textbf{, }we have
\[
\left\Vert \alpha_{n}x_{n}\right\Vert _{X}\leq\left\Vert x_{n}\right\Vert
_{X}\text{ for all }n\in\mathbb{N}\text{.}%
\]
Since%
\[
\sum\limits_{n=1}^{\infty}\left\Vert x_{n}\right\Vert _{X}<\infty\text{,}%
\]
we obtain
\[
\sum\limits_{n=1}^{\infty}\left\Vert \alpha_{n}x_{n}\right\Vert _{X}\leq
\sum\limits_{n=1}^{\infty}\left\Vert x_{n}\right\Vert _{X}<\infty\text{.}%
\]
Now, let $c=\left(  c_{n}\right)  _{n=1}^{\infty}\in\ell_{\infty}%
\setminus\left\{  0\right\}  $. We claim that the series $\sum\limits_{n=1}%
^{\infty}c_{n}\alpha_{n}x_{n}$ converges in $X$. Indeed, due to the fact that
\[
\left\Vert \left\Vert c\right\Vert _{\ell_{\infty}}^{-1}c_{n}\alpha_{n}%
x_{n}\right\Vert _{X}\leq\left\Vert \alpha_{n}x_{n}\right\Vert _{X}%
<\infty\text{,}%
\]
if we take $\varepsilon>0$, then there exists $n_{0}\in\mathbb{N}$ such that%
\[
\sum\limits_{n=n_{0}+1}^{\infty}\left\Vert \left\Vert c\right\Vert
_{\ell_{\infty}}^{-1}c_{n}\alpha_{n}x_{n}\right\Vert _{X}<\varepsilon\text{.}%
\]
Thus, for $r,s\in\mathbb{N}$ with $r,s\geq n_{0}$, we have
\begin{align*}
\left\Vert \sum\limits_{n=1}^{r}\left\Vert c\right\Vert _{\ell_{\infty}}%
^{-1}c_{n}\alpha_{n}x_{n}-\sum\limits_{n=1}^{s}\left\Vert c\right\Vert
_{\ell_{\infty}}^{-1}c_{n}\alpha_{n}x_{n}\right\Vert _{X}  &  =\left\Vert
\sum\limits_{n=s+1}^{r}\left\Vert c\right\Vert _{\ell_{\infty}}^{-1}%
c_{n}\alpha_{n}x_{n}\right\Vert _{X}\\
&  \overset{s<r}{\leq}\sum\limits_{n=s+1}^{r}\left\Vert \left\Vert
c\right\Vert _{\ell_{\infty}}^{-1}c_{n}\alpha_{n}x_{n}\right\Vert _{X}\\
&  \leq\sum\limits_{n=n_{0}+1}^{\infty}\left\Vert \left\Vert c\right\Vert
_{\ell_{\infty}}^{-1}c_{n}\alpha_{n}x_{n}\right\Vert _{X}\\
&  <\varepsilon\text{.}%
\end{align*}
Since $X$ is complete and Hausdorff, we can conclude that sequence $\left(
\sum_{n=1}^{r}\left\Vert c\right\Vert _{\ell_{\infty}}^{-1}c_{n}\alpha
_{n}x_{n}\right)  _{r=1}^{\infty}$ converges in $X$ for the series%
\[
\sum\limits_{n=1}^{\infty}\left\Vert c\right\Vert _{\ell_{\infty}}^{-1}%
c_{n}\alpha_{n}x_{n}\text{.}%
\]
Hence,%
\[
\sum\limits_{n=1}^{\infty}\left\Vert c\right\Vert _{\ell_{\infty}}^{-1}%
c_{n}\alpha_{n}x_{n}\in X\text{,}%
\]
and consequently,%
\[
\sum\limits_{n=1}^{\infty}c_{n}\alpha_{n}x_{n}=\left\Vert c\right\Vert
_{\ell_{\infty}}\sum\limits_{n=1}^{\infty}\left\Vert c\right\Vert
_{\ell_{\infty}}^{-1}c_{n}\alpha_{n}x_{n}\in X\text{.}%
\]
Our aim from this point forward will be to demonstrate that the series
$\sum_{n=1}^{\infty}c_{n}\alpha_{n}x_{n}$ converges to a vector in $\left(
X\setminus\bigcup_{i\in I}Y_{i}\right)  \cup\left\{  0\right\}  $. To achieve
this, we will assume, by way of contradiction, the existence of $y\in\left(
\bigcup_{i\in I}Y_{i}\right)  \setminus\left\{  0\right\}  $ such that
\begin{equation}
\sum\limits_{n=1}^{\infty}c_{n}\alpha_{n}x_{n}=y\text{.}
\label{The series converging to y}%
\end{equation}
Since $y\in Y_{i}$ for some $i\in I$, we have
\[
Q_{i}\left(
{\textstyle\sum\limits_{n=1}^{\infty}}
c_{n}\alpha_{n}x_{n}\right)  =Q_{i}\left(  y\right)  =0\text{.}%
\]
That is,
\[%
{\textstyle\sum\limits_{n=1}^{\infty}}
c_{n}Q_{i}\left(  \alpha_{n}x_{n}\right)  =0\text{.}%
\]
Since $\left(  Q_{i}\left(  \alpha_{n}x_{n}\right)  \right)  _{n=1}^{\infty}$
is $\ell_{\infty}$-independent in $X/Y_{i}$ and $\left(  c_{n}\right)
_{n=1}^{\infty}\in\ell_{\infty}$, we get
\[
\left(  c_{n}\right)  _{n=1}^{\infty}=0\text{.}%
\]
From $\left(  \text{\ref{The series converging to y}}\right)  $, we obtain
\[
y=0\text{,}%
\]
which leads to a contradiction. Therefore, the series $\sum_{n=1}^{\infty
}c_{n}\alpha_{n}x_{n}$ converges to a vector in $\left(  X\setminus
\bigcup_{i\in I}Y_{i}\right)  \cup\left\{  0\right\}  $. This shows that
$X\setminus\bigcup_{i\in I}Y_{i}$ is $[\ell_{\infty}]$-lineable and by \cite{S
lineability}, we conclude that $X\setminus\bigcup_{i\in I}Y_{i}$ is
$[\mathcal{S}]$-lineable for each subspace $\mathcal{S}$ of $\ell_{\infty}$.
For the proof of the converse, it suffices to take $\mathcal{S}=c_{00}$ and
utilize the fact that $X\setminus Y$ is $\left[  c_{00}\right]  $-lineable if
and only if it is lineable.
\end{proof}

\section{A kind of extension\label{section 3}}

A result due Bernal et al$.$ in \cite{S lineability} states that if $A$ is a
subset $[\left(  x_{n}\right)  _{n=1}^{\infty},\mathcal{S}]$-lineable of a
metrizable vector space $X$, where $\mathcal{S}$ is an infinite dimensional
subspace of $\mathbb{K}^{\mathbb{N}}$ and $\left(  x_{n}\right)
_{n=1}^{\infty}\,$is a basic sequence, (recall that a sequence $\left(
x_{n}\right)  _{n=1}^{\infty}$of a metrizable topological vector space $X$ is
said to be basic whenever every $x\in\operatorname*{span}\left\{  x_{n}%
:n\in\mathbb{N}\right\}  $, the closed linear span of the $x_{n}$'s, can be
uniquely represented as a convergent series $x=\sum_{n=1}^{\infty}c_{n}x_{n}$
), then $A$ is lineable. More precisely, they showed the following:

\begin{proposition}
Assume that $\left(  x_{n}\right)  _{n=1}^{\infty}$ is a basic sequence in
$X$, where $X$ is metrizable, and that $A$ is a $[\left(  x_{n}\right)
_{n=1}^{\infty},\mathcal{S}]$-lineable subset of $X$, where $\mathcal{S}$ is
an infinite dimensional subspace of $\mathbb{K}^{\mathbb{N}}$. Then $A$ is lineable.
\end{proposition}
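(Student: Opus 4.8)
The plan is to realize an infinite dimensional subspace of $A\cup\left\{  0\right\}  $ as the image of a single linear map defined on $\mathcal{S}$. To this end I would consider the summation map
\[
T\colon\mathcal{S}\longrightarrow X,\qquad T\left(  \left(  c_{n}\right)  _{n=1}^{\infty}\right)  =\sum_{n=1}^{\infty}c_{n}x_{n}.
\]
Since $A$ is $[\left(  x_{n}\right)  _{n=1}^{\infty},\mathcal{S}]$-lineable, for every $\left(  c_{n}\right)  _{n=1}^{\infty}\in\mathcal{S}$ the series on the right converges in $X$ to a vector of $A\cup\left\{  0\right\}  $; hence $T$ is well defined and $T(\mathcal{S})\subseteq A\cup\left\{  0\right\}  $.

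The first thing to check is that $T$ is linear. For $\left(  c_{n}\right)  ,\left(  d_{n}\right)  \in\mathcal{S}$ and scalars $\alpha,\beta$, the sequence $\left(  \alpha c_{n}+\beta d_{n}\right)  $ again belongs to the subspace $\mathcal{S}$, so $T\left(  \alpha c+\beta d\right)  $ is defined; writing out the partial sums and passing to the limit, using the continuity of addition and scalar multiplication in the topological vector space $X$, gives $T\left(  \alpha c+\beta d\right)  =\alpha T(c)+\beta T(d)$. In particular $T(\mathcal{S})$ is a subspace of $X$. The crucial step, which is exactly where the basic-sequence hypothesis enters, is the injectivity of $T$: if $T(c)=T(d)$, then $\sum_{n=1}^{\infty}c_{n}x_{n}=\sum_{n=1}^{\infty}d_{n}x_{n}$ is a single element of the closed linear span of $\left\{  x_{n}:n\in\mathbb{N}\right\}  $ represented by two convergent series in the $x_{n}$; since $\left(  x_{n}\right)  _{n=1}^{\infty}$ is basic, this representation is unique, so $c_{n}=d_{n}$ for every $n$, that is, $c=d$.

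With these two facts in hand the conclusion is immediate: $T$ is an injective linear map and $\mathcal{S}$ is infinite dimensional, so $T(\mathcal{S})$ is an infinite dimensional subspace of $X$, and it is contained in $A\cup\left\{  0\right\}  $; therefore $A$ is lineable. I expect the only point demanding genuine care to be the limit passage underlying the linearity of $T$ --- that termwise linear combinations of the two convergent series again converge to the corresponding combination of the limits --- whereas injectivity comes essentially for free from the uniqueness built into the definition of a basic sequence. The metrizability of $X$ is used only insofar as it is part of the ambient framework in which the notions of basic sequence and of convergence of the defining series are meaningful.
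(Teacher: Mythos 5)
Your proposal is correct and is essentially the approach the paper itself uses: the paper states this Proposition without proof (quoting it from \cite{S lineability}), but its own proof of the Section \ref{section 3} extension is exactly your argument --- the summation operator $T\colon\mathcal{S}\longrightarrow X$, $T\left(\left(c_{n}\right)_{n=1}^{\infty}\right)=\sum_{n=1}^{\infty}c_{n}x_{n}$, shown to be well defined, linear and injective, followed by the dimension count $\dim T\left(\mathcal{S}\right)=\dim\mathcal{S}\geq\aleph_{0}$ and the inclusion $T\left(\mathcal{S}\right)\subseteq A\cup\left\{0\right\}$ --- with the only difference being the source of injectivity ($\ell_{\infty}$-independence there, uniqueness of basic-sequence expansions here). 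Your handling of the delicate points is sound: well-definedness of $T$ rests on uniqueness of limits (metrizability gives Hausdorffness), linearity on continuity of the vector operations, and injectivity on the fact that the two equal sums lie in the closed linear span of the $x_{n}$'s, where the representation is unique by the definition of a basic sequence.
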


In this section, we complement this result by removing the assumption of
metrizability and the requirement for the sequence to be basic.

\begin{theorem}
Let $X\neq\left\{  0\right\}  $ be a Hausdorff topological vector space and
$A$ be a nonvoid subset of $X$. If $\left(  x_{n}\right)  _{n=1}^{\infty}$ is
an $\ell_{\infty}$-independent sequence of elements of $X$ and $A$ is $\left[
\left(  x_{n}\right)  _{n=1}^{\infty},\mathcal{S}\right]  $-lineable, for some
infinite dimensional subspace $\mathcal{S}$ of $\ell_{\infty}$, then $A$ is lineable.
\end{theorem}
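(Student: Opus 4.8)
The plan is to produce the required infinite dimensional subspace of $A \cup \{0\}$ as the image of $\mathcal{S}$ under the natural summation map. First I would define $\Phi \colon \mathcal{S} \longrightarrow X$ by $\Phi\bigl((c_n)_{n=1}^\infty\bigr) = \sum_{n=1}^\infty c_n x_n$. The assumption that $A$ is $[(x_n)_{n=1}^\infty, \mathcal{S}]$-lineable says precisely that for every $(c_n)_{n=1}^\infty \in \mathcal{S}$ this series converges in $X$ to an element of $A \cup \{0\}$; since $X$ is Hausdorff the limit is unique, so $\Phi$ is a well-defined map whose range lies in $A \cup \{0\}$. Checking that $\Phi$ is linear is routine: for $(c_n)_{n=1}^\infty, (d_n)_{n=1}^\infty \in \mathcal{S}$ and $a, b \in \mathbb{K}$ the sequence $(ac_n + bd_n)_{n=1}^\infty$ again belongs to $\mathcal{S}$, so the corresponding series converges, and the continuity of addition and scalar multiplication in $X$ lets one split the limit of partial sums to obtain $\Phi(a(c_n) + b(d_n)) = a\Phi((c_n)) + b\Phi((d_n))$.

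The heart of the argument, and the only place where the hypotheses are really used, is injectivity of $\Phi$. Suppose $\Phi\bigl((c_n)_{n=1}^\infty\bigr) = 0$, that is $\sum_{n=1}^\infty c_n x_n = 0$. Because $\mathcal{S}$ is a subspace of $\ell_\infty$, the coefficient sequence $(c_n)_{n=1}^\infty$ lies in $\ell_\infty$, and the $\ell_\infty$-independence of $(x_n)_{n=1}^\infty$ then forces $(c_n)_{n=1}^\infty = 0$. Hence $\ker \Phi = \{0\}$ and $\Phi$ is a linear isomorphism of $\mathcal{S}$ onto its image.

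To conclude, I would observe that since $\Phi$ is injective, $\Phi(\mathcal{S})$ is a subspace of $X$ whose dimension equals $\dim \mathcal{S}$, which is infinite by assumption; as $\Phi(\mathcal{S}) \subseteq A \cup \{0\}$, the set $A \cup \{0\}$ contains an infinite dimensional subspace, so $A$ is lineable. I do not expect any genuine obstacle here: the content is conceptual rather than technical, namely recognizing that $\ell_\infty$-independence is exactly the condition that makes the summation map injective. This is what allows the metrizability and basic-sequence assumptions of the earlier proposition to be dropped, since neither completeness of $X$ nor closedness of $\mathcal{S}$ is needed once convergence of the relevant series is already guaranteed by the lineability hypothesis itself.
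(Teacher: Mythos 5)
Your proposal is correct and follows essentially the same route as the paper: both define the summation operator $T\colon\mathcal{S}\longrightarrow X$, $T\left(\left(c_{n}\right)_{n=1}^{\infty}\right)=\sum_{n=1}^{\infty}c_{n}x_{n}$, use $\ell_{\infty}$-independence for injectivity, Hausdorffness to place the (well-defined) image inside $A\cup\left\{0\right\}$, and conclude that $T\left(\mathcal{S}\right)$ is an infinite dimensional subspace witnessing lineability. Your write-up is if anything slightly more detailed than the paper's (spelling out linearity and uniqueness of limits), but the idea is identical.
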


\begin{proof}
Let $\mathcal{S}$ be an infinite dimensional subspace of $\ell_{\infty}$ and
assume that $A$ is $\left[  \left(  x_{n}\right)  _{n=1}^{\infty}%
,\mathcal{S}\right]  $-lineable. Since $\left(  x_{n}\right)  _{n=1}^{\infty}$
is $\ell_{\infty}$-independent in $X$, we can conclude that the operator
$T\colon\mathcal{S}\longrightarrow X$ given by $T\left(  \left(  c_{n}\right)
_{n=1}^{\infty}\right)  =\sum_{n=1}^{\infty}c_{n}x_{n}$ is not only
well-defined and linear, but also injective. Thus,
\[
\dim T\left(  \mathcal{S}\right)  =\dim\mathcal{S}\geq\aleph_{0}\text{.}%
\]
Furthermore, due to the fact that $X$ is Hausdorff, we have
\[
T\left(  \mathcal{S}\right)  \subset A\cup\left\{  0\right\}  \text{.}%
\]
Hence, $A$ is lineable and the proof is complete.
\end{proof}

\section{Negative results\label{section 4}}

In this section, we consider results that provide an enlightening insight into
the $\left[  \mathcal{S}\right]  $-lineability of subsets in some class of
some classes of infinite dimensional vector spaces. The next proposition
highlights the importance of sequence properties concerning their distance
from the origin in normed vector spaces. Roughly speaking, the next statement
shows that sequences away from zero do not generate $\left[  \mathcal{S}%
\right]  $-lineability.

\begin{proposition}
Let $X$ be an infinite dimensional normed space. If $\left(  x_{n}\right)
_{n=1}^{\infty}$ is a linearly independent sequence in $X$ such that
$\inf_{n\in\mathbb{N}}\left\Vert x_{n}\right\Vert _{X}>0$, then for any
infinite dimensional subspace $\mathcal{S}$ of $\ell_{\infty}$ properly
containing $c_{0}$, there is no subset of $X$ that is $\left[  \left(
x_{n}\right)  _{n=1}^{\infty},\mathcal{S}\right]  $-lineable.
\end{proposition}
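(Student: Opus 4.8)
The plan is to reduce the nonexistence of an $\left[\left(x_{n}\right)_{n=1}^{\infty},\mathcal{S}\right]$-lineable set to the divergence of a single carefully chosen series. The starting observation is that if some subset $A\subseteq X$ were $\left[\left(x_{n}\right)_{n=1}^{\infty},\mathcal{S}\right]$-lineable, then, by the very definition recalled in the introduction, the series $\sum_{n=1}^{\infty}c_{n}x_{n}$ would have to converge in $X$ for \emph{every} sequence $\left(c_{n}\right)_{n=1}^{\infty}\in\mathcal{S}$. Consequently, to prove that no such $A$ can exist, it suffices to exhibit one sequence in $\mathcal{S}$ whose associated series fails to converge. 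This at once rules out every candidate $A$, since the requirement is imposed uniformly over all of $\mathcal{S}$.

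To produce such a sequence, I would exploit the hypothesis that $\mathcal{S}$ properly contains $c_{0}$. The proper inclusion $c_{0}\subsetneq\mathcal{S}$ guarantees the existence of some $\left(c_{n}\right)_{n=1}^{\infty}\in\mathcal{S}\setminus c_{0}$, that is, a bounded sequence that does not converge to $0$. Thus $\left|c_{n}\right|\nrightarrow 0$; equivalently, there exist $\delta>0$ and a subsequence $\left(c_{n_{k}}\right)_{k=1}^{\infty}$ with $\left|c_{n_{k}}\right|\geq\delta$ for all $k$.

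Next I would invoke the elementary necessary condition for convergence of a series in a normed space: if $\sum_{n=1}^{\infty}c_{n}x_{n}$ converges in $X$, then its general term must tend to $0$, that is, $\left\Vert c_{n}x_{n}\right\Vert_{X}=\left|c_{n}\right|\left\Vert x_{n}\right\Vert_{X}\rightarrow 0$. Setting $m:=\inf_{n\in\mathbb{N}}\left\Vert x_{n}\right\Vert_{X}>0$, one has $\left|c_{n}\right|\left\Vert x_{n}\right\Vert_{X}\geq m\left|c_{n}\right|$, and since $\left|c_{n}\right|\nrightarrow 0$ the right-hand side does not tend to $0$. Hence $c_{n}x_{n}\nrightarrow 0$ and the series $\sum_{n=1}^{\infty}c_{n}x_{n}$ diverges, contradicting the convergence that $\left[\left(x_{n}\right)_{n=1}^{\infty},\mathcal{S}\right]$-lineability would force for this element of $\mathcal{S}$.

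There is no substantial technical obstacle here; the argument is immediate once the ingredients are aligned. The only point requiring care is the logical framing of a nonexistence statement: rather than arguing about a fixed $A$, the cleanest route is to note that $\left[\left(x_{n}\right)_{n=1}^{\infty},\mathcal{S}\right]$-lineability of \emph{any} subset already entails convergence of all the series $\sum_{n=1}^{\infty}c_{n}x_{n}$ with $\left(c_{n}\right)_{n=1}^{\infty}\in\mathcal{S}$, so a single divergent series settles the matter. The two hypotheses play transparent and complementary roles: the proper inclusion $c_{0}\subsetneq\mathcal{S}$ supplies a coefficient sequence bounded away from $0$ along a subsequence, while the condition $\inf_{n}\left\Vert x_{n}\right\Vert_{X}>0$ prevents the vectors $x_{n}$ from shrinking fast enough to compensate, so the general term of the series cannot vanish.
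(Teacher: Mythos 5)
Your proposal is correct and follows essentially the same route as the paper's own proof: pick $\left(c_{n}\right)_{n=1}^{\infty}\in\mathcal{S}\setminus c_{0}$ (nonempty by the proper containment), use $\left\Vert c_{n}x_{n}\right\Vert _{X}\geq\left\vert c_{n}\right\vert \inf_{n}\left\Vert x_{n}\right\Vert _{X}$ to see that the general term cannot tend to zero, and contradict the convergence of $\sum_{n=1}^{\infty}c_{n}x_{n}$ forced by $\left[\left(x_{n}\right)_{n=1}^{\infty},\mathcal{S}\right]$-lineability. Your framing of the nonexistence claim (one divergent series rules out every candidate $A$ at once) is a slightly cleaner packaging of the same contradiction argument the paper gives.
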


\begin{proof}
Assume that $A$ is a subset of $X$ which is $\left[  \left(  x_{n}\right)
_{n=1}^{\infty},\mathcal{S}\right]  $-lineable in $X$ for some infinite
dimensional subspace $\mathcal{S}$ of $\ell_{\infty}$ properly containing
$c_{0}$. Given $\left(  c_{n}\right)  _{n=1}^{\infty}\in\mathcal{S}$, we have
\[
\left\Vert c_{n}x_{n}\right\Vert _{X}=\left\vert c_{n}\right\vert \left\Vert
x_{n}\right\Vert _{X}\geq\left\vert c_{n}\right\vert \underset{n\in\mathbb{N}%
}{\inf}\left\Vert x_{n}\right\Vert _{X}\text{ for all }n\in\mathbb{N}\text{.}%
\]
That is,%
\begin{equation}
\left(  \underset{n\in\mathbb{N}}{\inf}\left\Vert x_{n}\right\Vert
_{X}\right)  ^{-1}\cdot\left\Vert c_{n}x_{n}\right\Vert _{X}\geq\left\vert
c_{n}\right\vert \text{ for all }n\in\mathbb{N}\text{.}
\label{a norma ficou limitada por baixo}%
\end{equation}
Due to $($\ref{a norma ficou limitada por baixo}$)$, if we take $\left(
c_{n}\right)  _{n=1}^{\infty}\in$ $\mathcal{S}\setminus c_{0}$, we can infer
that the sequence $\left(  \left\Vert c_{n}x_{n}\right\Vert _{X}\right)
_{n=1}^{\infty}$ does not converges to zero. However, this leads to a
contradiction, since the series $\sum_{n=1}^{\infty}c_{n}x_{n}$ converges in
$X$ for some vector of $A\cup\left\{  0\right\}  $. Therefore, the desired
result follows.
\end{proof}

The result above remains valid in the context of $p$-Banach spaces as well.
Recall that if $p\in(0,1]$, then a $p$-Banach space $X$ is a vector space
equipped with a non-negative function denoted by $\left\Vert \cdot\right\Vert
$, also known as a $p$-norm, which satisfies the following conditions:

\begin{enumerate}
\item $\left\Vert x\right\Vert _{p}=0$ if and only if $x=0$.

\item $\left\Vert tx\right\Vert _{p}=\left\vert t\right\vert ^{p}\left\Vert
x\right\Vert _{p}$ for all $x\in X$ and $t\in\mathbb{R}$.

\item $\left\Vert x+y\right\Vert _{p}\leq\left\Vert x\right\Vert
_{p}+\left\Vert y\right\Vert _{p}$ for all $x,y\in X$.
\end{enumerate}

The space $\ell_{p}$ (with $p\in\left(  0,1\right)  $) of all real sequences
$\left(  x_{n}\right)  _{n=1}^{\infty}$ such that the series $%
{\textstyle\sum_{n=1}^{\infty}}
\left\vert x_{n}\right\vert ^{p}$ converges is an example of $p$-Banach space.
If we consider the sequence $\left(  e_{n}\right)  _{n=1}^{\infty}$ in
$\ell_{\infty}$, where $e_{n}=$ $(0,0,\ldots,0,1,0,0,\ldots)$ (with the $1$ at
the $n$th place), we obtain the following result:

\begin{corollary}
For $p\in\left(  0,1\right)  $ and any infinite dimensional subspace
$\mathcal{S}$ of $\ell_{\infty}$ properly containing $c_{0}$, there is no
subset of $\ell_{p}$ that is $\left[  \left(  e_{n}\right)  _{n=1}^{\infty
},\mathcal{S}\right]  $-lineable.
\end{corollary}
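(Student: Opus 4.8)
The plan is to mirror the proof of the preceding proposition, adapting the homogeneity estimate to the $p$-norm and then specializing to the canonical sequence $\left(  e_{n}\right)  _{n=1}^{\infty}$ of $\ell_{p}$. First I would record the two structural facts needed: the sequence $\left(  e_{n}\right)  _{n=1}^{\infty}$ is linearly independent in $\ell_{p}$, and, since $\left\Vert e_{n}\right\Vert _{p}=\left\vert 1\right\vert ^{p}=1$ for every $n\in\mathbb{N}$, it satisfies $\inf_{n\in\mathbb{N}}\left\Vert e_{n}\right\Vert _{p}=1>0$. Thus $\left(  e_{n}\right)  _{n=1}^{\infty}$ plays exactly the role of the sequence $\left(  x_{n}\right)  _{n=1}^{\infty}$ appearing in the proposition, only now in the $p$-Banach setting.

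Next I would argue by contradiction. Suppose some subset $A\subseteq\ell_{p}$ is $\left[  \left(  e_{n}\right)  _{n=1}^{\infty},\mathcal{S}\right]  $-lineable for an infinite dimensional subspace $\mathcal{S}$ of $\ell_{\infty}$ with $c_{0}\subsetneq\mathcal{S}$. Because the containment is proper, I can choose a sequence $\left(  c_{n}\right)  _{n=1}^{\infty}\in\mathcal{S}\setminus c_{0}$; by the definition of $c_{0}$, this sequence does not converge to zero. By the definition of $\left[  \left(  e_{n}\right)  _{n=1}^{\infty},\mathcal{S}\right]  $-lineability, the series $\sum_{n=1}^{\infty}c_{n}e_{n}$ must then converge in $\ell_{p}$.

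The crux is the elementary but essential fact that in any Hausdorff topological vector space the convergence of a series forces its general term to tend to zero. Applying this to $\sum_{n=1}^{\infty}c_{n}e_{n}$ and invoking property (2) of the $p$-norm, I obtain
\[
\left\Vert c_{n}e_{n}\right\Vert _{p}=\left\vert c_{n}\right\vert ^{p}\left\Vert e_{n}\right\Vert _{p}=\left\vert c_{n}\right\vert ^{p}\longrightarrow0\text{,}
\]
whence $\left\vert c_{n}\right\vert \rightarrow0$, that is, $\left(  c_{n}\right)  _{n=1}^{\infty}\in c_{0}$, contradicting the choice of $\left(  c_{n}\right)  _{n=1}^{\infty}$.

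The argument is essentially routine; the only point demanding care is the transfer of the homogeneity step from the normed case $\left\Vert c_{n}x_{n}\right\Vert =\left\vert c_{n}\right\vert \left\Vert x_{n}\right\Vert $ to the $p$-normed one $\left\Vert c_{n}x_{n}\right\Vert _{p}=\left\vert c_{n}\right\vert ^{p}\left\Vert x_{n}\right\Vert _{p}$, where the exponent $p$ now appears. Since $\left\vert c_{n}\right\vert ^{p}\rightarrow0$ is equivalent to $\left\vert c_{n}\right\vert \rightarrow0$, the conclusion is unaffected, which confirms that the negative result of the proposition carries over verbatim to $\ell_{p}$ with $\left(  x_{n}\right)  _{n=1}^{\infty}=\left(  e_{n}\right)  _{n=1}^{\infty}$.
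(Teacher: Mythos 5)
Your proof is correct and follows exactly the route the paper intends: the corollary is presented as an immediate consequence of the preceding proposition's argument transferred to the $p$-Banach setting, using $\left\Vert e_{n}\right\Vert _{p}=1$ so that $\inf_{n\in\mathbb{N}}\left\Vert e_{n}\right\Vert _{p}>0$, picking $\left(  c_{n}\right)  _{n=1}^{\infty}\in\mathcal{S}\setminus c_{0}$, and contradicting the fact that convergence of $\sum_{n=1}^{\infty}c_{n}e_{n}$ forces $\left\Vert c_{n}e_{n}\right\Vert _{p}=\left\vert c_{n}\right\vert ^{p}\rightarrow0$. Your explicit handling of the $p$-homogeneity $\left\Vert c_{n}e_{n}\right\Vert _{p}=\left\vert c_{n}\right\vert ^{p}\left\Vert e_{n}\right\Vert _{p}$ is precisely the adaptation the paper leaves implicit.
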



\begin{thebibliography}{99}                                                                                               %


\bibitem {Aron}R.M. Aron, L. Bernal-Gonz\'{a}lez, D. Pellegrino and J.B.
Seoane-Sep\'{u}lveda, \textit{Lineability: the search for linearity in
mathematics}, Monographs and Research Notes in Mathematics. CRC Press, Boca
Raton, FL, 2016. xix+308 pp.

\bibitem {AGSS}R.M. Aron, V.I. Gurariy and J.B. Seoane-Sep\'{u}lveda,
\textit{Lineability and spaceability of sets of functions on }$\mathbb{R}$,
Proc. Amer. Math. Soc. \textbf{133} (2004), 795--803.

\bibitem {Bernal}L. Bernal-Gonz\'{a}lez and M.O. Cabrera, \textit{Lineability
criteria, with applications}, J. Funct. Anal. \textbf{266} (2014), 3997--4025.

\bibitem {S lineability}L. Bernal-Gonz\'{a}lez, J. A. Conejero, M.
Murillo-Arcila, J.B. Seoane-Sep\'{u}lveda, $[S]$\textit{-linear and convex
structures in function families}, Linear Algebra Appl. \textbf{579} (2019), 463--483.

\bibitem {Drewnowski}L. Drewnowski, I. Labuda and Z. Lipecki,
\textit{Existence of quasi-bases for separable topological linear spaces,}
Arch. Math. (Basel) \textbf{37} (1981), no. 5, 454--456.

\bibitem {Enflo}P.H. Enflo, V.I. Gurariy, and J.B. Seoane-Sep\'{u}lveda,
\textit{Some Results and Open Questions on Spaceability in Function Spaces},
Trans. Amer. Math. Soc., \textbf{366} (2014), no. 2, 611--625.

\bibitem {Gurariy}V.I. Gurariy, \textit{Subspaces and bases in spaces of
continuous functions}, Dokl. Akad. Nauk SSSR \textbf{167} (1966), 971--973 (Russian).

\bibitem {Kitson}D. Kitson and R.M. Timoney, \textit{Operator ranges and
spaceability}. J. Math. Anal. Appl. \textbf{378} (2011), 680--686.

\bibitem {Le}B. Levine and D. Milman, \textit{On linear sets in space }%
$C$\textit{ consisting of functions of bounded variation}, Comm. Inst. Sci.
Math. M\'{e}c. Univ. Kharkoff [Zapiski Inst. Mat. Mech.] (4) \textbf{16}
(1940), 102--105 (Russian, with English summary).

\bibitem {Pellegrino}D.M. Pellegrino and E. V. Teixeira, \textit{Norm
optimization problem for linear operators in classical Banach spaces}, Bull.
Braz. Math. Soc. (N.S.) \textbf{40} (2009), no. 3, 417--431.

\bibitem {Seoane}J.B. Seoane-Sep\'{u}lveda, \textit{Chaos and lineability of
pathological phenomena in analysis}, Thesis (Ph.D.), Kent State University,
ProQuest LLC, Ann Arbor, MI (2006), 139 pages.
\end{thebibliography}
\end{document}